\documentclass[12pt, reqno]{amsproc}
\usepackage{amsmath, amsthm, amscd, amsfonts, amssymb, graphicx}
\usepackage{geometry}
\usepackage{calc}
\usepackage{color,hyperref}
\definecolor{darkblue}{rgb}{0.0,0.0,0.3}
\hypersetup{colorlinks,breaklinks,
            linkcolor=red,urlcolor=pink,
            anchorcolor=darkblue,citecolor=blue}
\newcommand{\makeheading}[2][]%
        {\hspace*{-\marginparsep minus \marginparwidth}%
         \begin{minipage}[t]{\textwidth+\marginparwidth+\marginparsep}%
             {\large \bfseries #2 \hfill #1}\\[-0.15\baselineskip]%
                 \rule{\columnwidth}{1pt}%
         \end{minipage}}

\setlength{\textwidth}{15 cm} \setlength{\textheight}{8.6 in}
\setlength{\evensidemargin}{1cm}
\setlength{\oddsidemargin}{1cm}
\newtheorem{theorem}{\bf{Theorem}}[section] 
\newtheorem{lemma}[theorem]{\bf{Lemma}}     
\newtheorem{corollary}[theorem]{\bf{Corollary}}

\newtheorem{rem}[theorem]{\bf{Remark}}
\theoremstyle{definition}

\title[Product-type  operators ]{Product-type  operators between minimal M\"{o}bius invariant  spaces and Zygmund type spaces}

\author[Hassanlu]{Mostafa Hassanlou}
\address{Engineering Faculty of Khoy, Urmia University of Technology, Urmia, Iran}
\email{m.hassanlou@urmia.ac.ir}
\author[Abbasi]{Ebrahim Abbasi$^*$}
\address{Department of Mathematics, Mahabad Branch, Islamic Azad University, Mahabad, Iran}
\email{ebrahimabbasi81@gmail.com
}
\author[Kanani]{Mehdi Kanani
Arpatapeh}
\address{Department of Mathematics, Payame Noor University, P.O. Box 19395-3697 Tehran, Iran}
\email{M.kanani@pnu.ac.ir}
\author[Nasresfahani]{Sepideh Nasresfahani}
\address{Department of Pure Mathematics, Faculty of Mathematics and Statistics, University of Isfahan, Isfahan, Iran}
\email{sepide.nasr@gmail.com
}
\keywords{essential norm, generalized Stevi\'c-Sharma type operators, Zygmund type spaces, minimal M\"{o}bius invariant spaces.\\
\indent $^{*}$ Corresponding author}
\subjclass[2020]{30H10, 30H99, 47B38}
\begin{document}

\vspace{1cm} \setcounter{page}{1} \thispagestyle{empty}
\maketitle
\begin{abstract}
 In this paper,  we consider product-type  operators $T^m_{u,v,\varphi}$ from minimal M\"{o}bius invariant spaces into Zygmund type spaces.  So some characterizations for   boundedness and essential norm of these operators are obtained. As a result some conditions for the compactness will be given.
\end{abstract}
\section{Introduction}
By $\mathbb{D}$ be the open unit disc in the complex plane $\mathbb{C}$, $H(\mathbb{D})$ is denoted the space of all analytic functions on $\mathbb{D}$.
The classic {Zygmund space} $\mathcal{Z}$ consists of all functions $f \in H(\mathbb{D})$ which are continuous on the closed unit ball
$ \overline{\mathbb{D}}$ and
$$\sup \frac{|f(e^{i(\theta +h)}) + f(e^{i(\theta -h)}) - 2 f (e^{i \theta})|}{h} < \infty,$$
where the supremum is taken over all $\theta \in \mathbb{R}$ and $h> 0$.
By \cite[Theorem 5.3]{peter}, an analytic function $f$ belongs to $\mathcal{Z}$ if and only if $\sup_{z\in \mathbb{D}} (1-|z|^2) |f''(z)| < \infty$.
Motivated by this, for each  $\alpha>0$, the {Zygmund type space} $\mathcal{Z}_{\alpha}$ is defined to be the space of all functions $f \in H(\mathbb{D})$ for which
$$\|f\|_{s\mathcal{Z}_{\alpha}} = \sup_{z\in \mathbb{D}} (1-|z|^2)^{\alpha} |f''(z)| < \infty.$$
The space $\mathcal{Z}_{\alpha}$ is a Banach space equipped with the norm
$$\|f\|_{\mathcal{Z}_{\alpha}} = |f(0)| + |f'(0)| + \|f\|_{s\mathcal{Z}_{\alpha}},$$
for each $f\in \mathcal{Z}_{\alpha}$.

Let $Aut(\mathbb{D})$ be the group of all conformal  automorphisms of $\mathbb{D}$ which is also called M\"{o}bius group. It is well-known that
each element of  $Aut(\mathbb{D})$ is of the form
$$ e^{i \theta} \sigma_a (z)= e^{i \theta} \frac{a-z}{1- \overline{a}z}, \ \ \ \ a,z \in \mathbb{D}, \ \ \ \ \theta \in \mathbb{R}.  $$
Let $X$ be a linear space of analytic functions on $\mathbb{D}$, which is complete. $X$ is called M\"{o}bius invariant if for each function $f$ in $X$ and each element $\psi$ in $Aut(\mathbb{D})$, the composition function $f \circ \psi$ also lies in $X$ and satisfies that
$||f \circ \psi||_X = ||f||_{X}$. For example, the space $H^{\infty}$ of all bounded analytic functions is M\"{o}bius invariant.
Also the Besov space $B_p(1 < p < \infty)$, is M\"{o}bius invariant which is the space of all $f \in H(\mathbb{D})$ such that
\begin{equation}\label{e10}
  \int_{\mathbb{D}} |f'(z)|^p (1-|z|^2)^{p-2} dA(z) < \infty.
\end{equation}
If $p= 2$, we have the well-known Dirichlet space. For $p=\infty$, $B_{\infty} = \mathcal{B}$ the classic Bloch space. The space
$B_1$ which is called the minimal M\"{o}bius invariant is defined separately. The function $f \in H(\mathbb{D})$ belongs to
$B_1$ if and only if it has representation as
$$ f(z) = \sum_{k=1}^{\infty} c_k \sigma_{a_k} \quad \mbox{where} \ \ \ a_k \in \mathbb{D} \ \  \mbox{and }\ \  \sum_{k=1}^{\infty} |c_k| < \infty. $$
The norm on $B_1$ defines as infimum of $\sum_{k=1}^{\infty} |c_k| $ for which the above statement holds. $B_1$ is contained in any
M\"{o}bius invariant space and it has been proved that is the set of all analytic functions $f$ on $\mathbb{D}$ such that
$f''$ lies in $L^1 (\mathbb{D}, dA)$. Also there exist positive constants $C_1$ and $C_2$ such that
\begin{equation}\label{e11}
  C_1 ||f||_{B_1} \leq |f(0)| + |f'(0)| + \int_{\mathbb{D}} |f''(z)| dA(z) \leq C_2 ||f||_{B_1}.
\end{equation}
Let $u, v, \varphi \in H(\mathbb{D})$ and  $\varphi: \mathbb{D} \rightarrow \mathbb{D}$. The Stevi\'c-Sharma type operator is defined as follows
\begin{align*}
T_{u, v, \varphi}f(z) = u(z) f{(\varphi(z))}+ v(z) f'(\varphi(z)) ,\quad f\in H(\mathbb{D} ), \quad z\in \mathbb{D}.
\end{align*}
Indeed $T_{u, v, \varphi}= uC_\varphi+vC_\varphi D$ where $D$ is the differentiation operator and $C_\varphi$ is composition operator.
More information about this operator can be found in \cite{ly, s1, s2}.

The generalized  Stevi\'c-Sharma type operator $T^m_{u,v,\varphi}$ is defined by the second auther of this paper and et al. in \cite{eym} as follows
 \begin{align*}
T^m_{u, v, \varphi}f(z)
= (u C_\varphi f)(z) + (D^m_{\varphi,v} f)(z) = u(z) f(\varphi(z)) + v(z) f^{(m)}(\varphi(z)),
\end{align*}
where $m\in\mathbb{N}$ and $D^m_{\varphi,u}$ is
the  generalized weighted  composition operator. When $v=0$, then $T^m_{u, 0, \varphi} = u C_\varphi $ is the well-known weighted composition operator. If $u = 0,$ then  $T^m_{0, v, \varphi} = D^m_{\varphi,v}$ and for $m=1$,   $T^m_{u, v, \varphi} = T_{u, v, \varphi}$   is Stevi\'c-Sharma type operator.
$T^m_{u, v, \varphi}$ also include other operators as well as product type operators which are studied in several papers in recent years.
The results of the papers can be stated to many operators and obtained the results of the papers published before.

For Banach spaces X and Y and a continuous linear operator $T: X\rightarrow Y$, the essential norm is the distance of $T$ from the space of all compact operators, that is
\[
\|T\|_{e} = \inf\{ \| T-K\| :  K: X\rightarrow Y\hspace{0.1cm}  \text{is compact}\}.
\]
$T$ is compact if and only if $\|T\|_e =0$.

In this paper, we study  the operator-theoretic properties in minimal M\"{o}bius invariant space. 
In section 2, we first bring some lemmas on the space $B_1$ and then  obtain some characterizations for   boundedness of operator $T^m_{u,v,\varphi}: B_1\rightarrow \mathcal{Z}_{\alpha}$. In section 3, some estimations for the essential norm of these operators are given. As a result, some new criteria for the compactness  of  $T^m_{u,v,\varphi}$ are presented.

By $A\succeq B$ we mean there exists a constant $C$ such that $A\geq CB$ and $A\approx B$ means that $A\succeq B\succeq A$.

\section{Boundedness}
In this section, we  give some necessary and sufficient conditions for the  generalized Stevi\'c-Sharma type operators to be bounded.
Firstly, we state some lemmas which are needed for proving the main results.

According to the definition of the norm in minimal  M\"{o}bius invariant space, for each $f \in B_1$, $||f||_{\infty} \leq ||f||_{B_1}$. Thus, from
\cite[Proposition 5.1.2]{far11} and \cite[Proposition 8]{far12} we have the following lemma.

\begin{lemma} \label{l10}
Let $n \in \mathbb{N}$. Then there exists a positive constant $C$ such that for each $f \in B_1$
$$ (1-|z|^2)^n |f^{(n)} (z)| \leq C ||f||_{B_1}. $$
\end{lemma}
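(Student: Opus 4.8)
The plan is to reduce the statement to the elementary Cauchy estimate for bounded analytic functions, exploiting the inclusion $B_1\subseteq H^\infty$ with $\|f\|_\infty\le\|f\|_{B_1}$ already recorded above. Concretely, fix $f\in B_1$ and $z\in\mathbb{D}$, put $r=(1-|z|)/2$, and apply the Cauchy integral formula for $f^{(n)}$ over the circle $\{w:|w-z|=r\}$, which lies in $\mathbb{D}$. This yields $|f^{(n)}(z)|\le n!\,r^{-n}\sup_{|w-z|=r}|f(w)|\le n!\,r^{-n}\|f\|_\infty$. Using $\|f\|_\infty\le\|f\|_{B_1}$ together with $1-|z|^2\le 2(1-|z|)=4r$, one gets $(1-|z|^2)^n|f^{(n)}(z)|\le 4^n n!\,\|f\|_{B_1}$, and taking the supremum over $z$ finishes the argument with $C=4^n n!$.

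Alternatively — and this is essentially the route behind the cited \cite[Proposition 5.1.2]{far11} and \cite[Proposition 8]{far12} — one can argue directly from the representation $f=\sum_k c_k\sigma_{a_k}$ with $\sum_k|c_k|<\infty$. The partial-fraction identity $\sigma_a(z)=\frac{1}{\bar a}+\frac{|a|^2-1}{\bar a}\cdot\frac{1}{1-\bar a z}$ gives, after differentiating $n$ times, $|\sigma_a^{(n)}(z)|=\dfrac{n!\,|a|^{n-1}(1-|a|^2)}{|1-\bar a z|^{n+1}}\le\dfrac{n!\,(1-|a|^2)}{|1-\bar a z|^{n+1}}$. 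Combining the two standard estimates $|1-\bar a z|\ge 1-|z|\ge\tfrac12(1-|z|^2)$ and $(1-|a|^2)(1-|z|^2)\le|1-\bar a z|^2$ (the latter being $|1-\bar a z|^2-(1-|a|^2)(1-|z|^2)=|a-z|^2\ge 0$) yields $(1-|z|^2)^n|\sigma_a^{(n)}(z)|\le 2^{n-1}n!$, uniformly in $a$ and $z$. Summing against $|c_k|$ and passing to the infimum over all admissible representations of $f$ gives the claim with $C=2^{n-1}n!$.

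I do not expect any genuine obstacle here. The only point needing a little care is keeping the constant independent of $z$ up to the boundary, which is ensured by letting the Cauchy contour (respectively the lower bound for $|1-\bar a z|$) scale like $1-|z|$; the rest is routine. I would present the first argument as the proof, since the embedding $\|f\|_\infty\le\|f\|_{B_1}$ is already in hand, and merely remark that it also follows from the two cited propositions.
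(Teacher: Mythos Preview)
Your proposal is correct. The paper itself does not give a detailed argument: it simply records $\|f\|_\infty\le\|f\|_{B_1}$ and then invokes \cite[Proposition~5.1.2]{far11} and \cite[Proposition~8]{far12}. Your first argument is precisely a self-contained version of that route---the $H^\infty$ embedding followed by the standard Cauchy/Bloch-type derivative estimate---so it matches the paper's approach. Your second argument via the atomic decomposition $f=\sum_k c_k\sigma_{a_k}$ is a legitimate and slightly sharper alternative (yielding $C=2^{n-1}n!$), though note that it is not really ``the route behind the cited'' Zhu references: those concern derivative-seminorm equivalences in Bloch-type spaces rather than the $B_1$ representation, so your parenthetical attribution there is a bit off.
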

As a similar proof in Lemma 2.5 of \cite{{xea1}} we get the following lemma.
\begin{lemma}  \label{l11}
Let
$$ f_{j,a}(z)= \left ( \frac{1-|a|^2}{1-\overline{a}z} \right)^j, \ \ \ \ j \in \mathbb{N}, \ a \in \mathbb{D}. $$
Then
$$ f_{j,a} \in B_1, \ \ \ \ \sup_{a \in \mathbb{D}} ||f_{j,a}||_{B_1} < \infty. $$
Moreover, $\{ f_{j,a} \} \rightarrow 0$ uniformly on compact subsets of $\mathbb{D}$ as $|a| \rightarrow 1$.
\end{lemma}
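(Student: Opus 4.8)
The plan is to reduce everything to the equivalent norm expression \eqref{e11}, namely $\|f\|_{B_1}\approx |f(0)|+|f'(0)|+\int_{\mathbb{D}}|f''(z)|\,dA(z)$, and then estimate the three pieces directly for $f=f_{j,a}$. First, since $a\in\mathbb{D}$ the denominator $1-\overline{a}z$ does not vanish on $\mathbb{D}$, so $f_{j,a}\in H(\mathbb{D})$, and differentiating $f_{j,a}(z)=(1-|a|^2)^j(1-\overline{a}z)^{-j}$ twice gives
\[
f_{j,a}'(z)=j\,\overline{a}\,\frac{(1-|a|^2)^j}{(1-\overline{a}z)^{j+1}},\qquad
f_{j,a}''(z)=j(j+1)\,\overline{a}^{\,2}\,\frac{(1-|a|^2)^j}{(1-\overline{a}z)^{j+2}}.
\]
Evaluating at the origin yields $|f_{j,a}(0)|=(1-|a|^2)^j\le 1$ and $|f_{j,a}'(0)|=j|a|(1-|a|^2)^j\le j$, so the first two terms contribute at most a constant depending only on $j$.

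For the integral term I would invoke the standard estimate $\int_{\mathbb{D}}|1-\overline{a}z|^{-(2+c)}\,dA(z)\approx(1-|a|^2)^{-c}$, valid for every $c>0$, applied with $c=j$. Since $|\overline{a}|^{2}\le 1$, this gives
\[
\int_{\mathbb{D}}|f_{j,a}''(z)|\,dA(z)\le j(j+1)(1-|a|^2)^j\int_{\mathbb{D}}\frac{dA(z)}{|1-\overline{a}z|^{j+2}}\le C_j,
\]
where $C_j$ depends only on $j$ and, in particular, is independent of $a$. Combining the three bounds yields $\sup_{a\in\mathbb{D}}\|f_{j,a}\|_{B_1}<\infty$, and in particular each $f_{j,a}$ lies in $B_1$.

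It remains to verify the uniform convergence to $0$ on compact subsets. Given a compact set $K\subset\mathbb{D}$, choose $r\in(0,1)$ with $K\subseteq\{|z|\le r\}$; then for $z\in K$ one has $|1-\overline{a}z|\ge 1-|a|\,r\ge 1-r$, hence $|f_{j,a}(z)|\le (1-|a|^2)^j/(1-r)^j\to 0$ as $|a|\to 1$, uniformly for $z\in K$. I do not anticipate a genuine obstacle: the only point needing a little care is recording the correct exponent in the integral estimate and noting that, although the resulting bound grows with $j$, it is a fixed finite constant once $j\in\mathbb{N}$ is fixed, which is exactly what the statement asserts.
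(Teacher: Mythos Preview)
Your argument is correct: the derivatives are computed properly, the Forelli--Rudin type estimate $\int_{\mathbb{D}}|1-\overline{a}z|^{-(2+c)}\,dA(z)\preceq (1-|a|^2)^{-c}$ for $c>0$ is exactly the right tool to bound the integral term uniformly in $a$, and the uniform-convergence check on compact subsets is straightforward. The paper itself does not supply a proof, referring instead to the analogous Lemma~2.5 of \cite{xea1}; your direct verification via the equivalent norm \eqref{e11} is the natural route and is presumably what that reference does as well.
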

The proof of the following lemmas are similar to the proof of  Lemmas 2.6  and 2.5 \cite{eym}, so they are omitted.
\begin{lemma} \label{l12}
For any $m \in \mathbb{N}-\{1,2\}$, $0 \not = a \in \mathbb{D}$ and
$i,k \in \{ 0,1, 2, m, m+1, m+2 \}$, there exits a function $g_{i,a} \in B_1$ such that
$$ g_{i,a}^{(k)} (a) = \frac{\delta_{ik} \overline{a}^{k}}{(1-|a|^2)^k}, $$
where $\delta_{ik}$ is Kronecker delta. For each $i\in\{0,1,2\}$  and $i\in\{m,m+1,m+2\}$ respectively
\[
 g_{i,a}(z) = \sum_{j=1}^{3}  c_j^i f_{j,a} (z), \qquad\quad g_{i,a}(z) = \sum_{j=m+1}^{m+3}  c_j^i f_{j,a} (z)
\]
 where $c_j^i$ is independent of $a$.
\end{lemma}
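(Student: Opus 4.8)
The plan is to reduce the construction of $g_{i,a}$ to the explicit behaviour at $z=a$ of the derivatives of the test functions $f_{j,a}$ of Lemma \ref{l11}, and then to two small determinant computations that do not involve $a$. Writing $f_{j,a}(z)=(1-|a|^2)^j(1-\overline{a}z)^{-j}$ and differentiating $k$ times,
\[
f_{j,a}^{(k)}(z)=(1-|a|^2)^j\,\frac{(j+k-1)!}{(j-1)!}\,\overline{a}^{\,k}\,(1-\overline{a}z)^{-(j+k)},
\]
so evaluating at $z=a$ and using $1-\overline{a}a=1-|a|^2$ gives
\[
f_{j,a}^{(k)}(a)=\frac{(j+k-1)!}{(j-1)!}\cdot\frac{\overline{a}^{\,k}}{(1-|a|^2)^k}.
\]
Hence for any constants $(c_j)$ the function $g=\sum_j c_j f_{j,a}$ satisfies $g^{(k)}(a)=\bigl(\sum_j c_j\,\tfrac{(j+k-1)!}{(j-1)!}\bigr)\,\tfrac{\overline{a}^{\,k}}{(1-|a|^2)^k}$, and the prefactor is exactly the normalization demanded in the statement; so the whole task is to choose $a$-free coefficients making the bracketed sum equal to $\delta_{ik}$ for the relevant $k$.

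For $i\in\{0,1,2\}$ take $g_{i,a}=\sum_{j=1}^{3}c_j^i f_{j,a}$. The requirement $g_{i,a}^{(k)}(a)=\delta_{ik}\,\tfrac{\overline{a}^{\,k}}{(1-|a|^2)^k}$ for $k\in\{0,1,2\}$ becomes, via the formula above, the linear system whose $k$-th equation is $\sum_{j=1}^{3}c_j^i\,\tfrac{(j+k-1)!}{(j-1)!}=\delta_{ik}$, i.e.\ $M c^i=e_i$ with the $a$-independent matrix
\[
M=\begin{pmatrix}1&1&1\\ 1&2&3\\ 2&6&12\end{pmatrix},\qquad \det M=2\neq0 .
\]
One then takes $c^i$ to be the corresponding column of $M^{-1}$, a vector of constants not depending on $a$; this produces the first displayed form of $g_{i,a}$, and $g_{i,a}\in B_1$ since it is a finite linear combination of the functions $f_{j,a}\in B_1$ provided by Lemma \ref{l11} (with $\sup_{a}\|g_{i,a}\|_{B_1}<\infty$).

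For $i\in\{m,m+1,m+2\}$ take instead $g_{i,a}=\sum_{j=m+1}^{m+3}c_j^i f_{j,a}$ and impose $g_{i,a}^{(k)}(a)=\delta_{ik}\,\tfrac{\overline{a}^{\,k}}{(1-|a|^2)^k}$ for $k\in\{m,m+1,m+2\}$. Putting $j=m+\ell$ and $k=m+p$ with $\ell\in\{1,2,3\}$, $p\in\{0,1,2\}$, the coefficient $\tfrac{(j+k-1)!}{(j-1)!}$ equals $(m+\ell)(m+\ell+1)\cdots(2m+\ell+p-1)$, a product of $m+p$ consecutive integers; pulling the common factor $(m+\ell)(m+\ell+1)\cdots(2m+\ell-1)$ out of column $\ell$ converts the coefficient matrix into
\[
\begin{pmatrix}1&1&1\\ 2m+1&2m+2&2m+3\\ (2m+1)(2m+2)&(2m+2)(2m+3)&(2m+3)(2m+4)\end{pmatrix},
\]
whose determinant, obtained by subtracting the first column from the other two and expanding along the first row, equals $2$ for every $m$. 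Hence the coefficient matrix is invertible for all $m\ge3$, the $c_j^i$ exist and are again independent of $a$, and $g_{i,a}\in B_1$ as before. The only step that is not purely mechanical is this last determinant: it has to be nonzero \emph{uniformly in $m$}, and the economical way to see that is precisely the column-factor extraction just used, which replaces the $m$-dependent matrix by the fixed one above with determinant $2$.
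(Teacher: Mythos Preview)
Your derivative formula $f_{j,a}^{(k)}(a)=\tfrac{(j+k-1)!}{(j-1)!}\cdot\tfrac{\overline a^{\,k}}{(1-|a|^2)^k}$ is correct, and the two $3\times3$ systems together with their determinant computations are exactly the standard construction the paper points to in \cite{eym}; at that level your argument matches the paper's (omitted) proof.

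There is, however, a real gap relative to the lemma \emph{as stated}. The Kronecker-delta identity is asserted for all six values $k\in\{0,1,2,m,m+1,m+2\}$, but for each $i$ you impose only the three conditions with $k$ in the same block as $i$; the remaining three ``cross-block'' conditions are not automatic. Concretely, for $i=0$ your system yields $(c_1^0,c_2^0,c_3^0)=(3,-3,1)$, and with $m=3$ one finds
\[
\sum_{j=1}^{3}c_j^0\,\frac{(j+2)!}{(j-1)!}=3\cdot6-3\cdot24+1\cdot60=6\neq0,
\]
so $g_{0,a}^{(3)}(a)=6\,\overline a^{\,3}/(1-|a|^2)^{3}\neq0$. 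Three free coefficients cannot satisfy six independent linear constraints, so the three-term form displayed in the lemma is in fact incompatible with the full six-index Kronecker-delta claim. Either the lemma is meant block-wise (the identity only for $k$ in the same block as $i$), in which case your proof is already complete, or $g_{i,a}$ must be taken as a combination of six test functions $f_{j,a}$ with $j\in\{1,2,3,m+1,m+2,m+3\}$ and one must verify invertibility of the resulting $a$-independent $6\times6$ matrix; this stronger version is what the applications in Theorems~\ref{th10} and~\ref{th15} actually require.
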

\begin{lemma} \label{m123}
Let $m=1$  or $2$, $0 \not = a \in \mathbb{D}$ and
$i,k \in \{ 0,1, \cdots, m+2 \}$, there exists a function $g_{i,a} \in B_1$ such that
$$ g_{i,a}^{(k)} (a) = \frac{\delta_{ik} \overline{a}^{k}}{(1-|a|^2)^k}. $$
\end{lemma}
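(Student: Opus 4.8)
The plan is to construct $g_{i,a}$ explicitly as a finite linear combination of the functions $f_{j,a}$ furnished by Lemma \ref{l11}, in the same spirit as Lemma \ref{l12}. The only structural difference is that when $m=1$ or $m=2$ the index sets $\{0,1,2\}$ and $\{m,m+1,m+2\}$ overlap, so the two separate families used in Lemma \ref{l12} collapse into the single range $\{0,1,\dots,m+2\}$, which can be handled by one family $\{f_{j,a}:1\le j\le m+3\}$. First I would record the derivatives of the building blocks. Writing $f_{j,a}(z)=(1-|a|^2)^j(1-\overline a z)^{-j}$, a direct induction gives, for every $k\ge 0$,
\[
f_{j,a}^{(k)}(z)=(1-|a|^2)^j\,\frac{(j+k-1)!}{(j-1)!}\,\overline a^{\,k}\,(1-\overline a z)^{-(j+k)},
\]
and evaluating at $z=a$, where $1-\overline a z=1-|a|^2$, yields
\[
f_{j,a}^{(k)}(a)=\lambda_{j,k}\,\frac{\overline a^{\,k}}{(1-|a|^2)^{k}},\qquad \lambda_{j,k}:=\frac{(j+k-1)!}{(j-1)!}=j(j+1)\cdots(j+k-1),
\]
with the convention $\lambda_{j,0}=1$.

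Next I would set up the associated linear system. Seeking $g_{i,a}=\sum_{j=1}^{m+3}c_j^{\,i}f_{j,a}$, the required identities $g_{i,a}^{(k)}(a)=\delta_{ik}\,\overline a^{\,k}/(1-|a|^2)^k$ for $k=0,1,\dots,m+2$ become, after cancelling the common nonzero factor $\overline a^{\,k}/(1-|a|^2)^k$,
\[
\sum_{j=1}^{m+3}c_j^{\,i}\,\lambda_{j,k}=\delta_{ik},\qquad k=0,1,\dots,m+2.
\]
This is an $(m+3)\times(m+3)$ linear system in $(c_j^{\,i})_{j=1}^{m+3}$ whose coefficient matrix $(\lambda_{j,k})$ does not depend on $a$. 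Hence, once invertibility is established, the solution $c_j^{\,i}$ exists and is independent of $a$, and then $g_{i,a}=\sum_{j=1}^{m+3}c_j^{\,i}f_{j,a}\in B_1$, since each $f_{j,a}\in B_1$ and $B_1$ is a linear space.

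The main obstacle will be verifying that $(\lambda_{j,k})_{0\le k\le m+2,\ 1\le j\le m+3}$ is invertible, but this is a standard generalized Vandermonde argument. For fixed $k$, the entry $\lambda_{j,k}=j(j+1)\cdots(j+k-1)$ is a monic polynomial in $j$ of degree exactly $k$; thus the rows of the matrix are the values of polynomials $p_0,p_1,\dots,p_{m+2}$ with $\deg p_k=k$ at the distinct nodes $j=1,2,\dots,m+3$. Since $\{p_0,\dots,p_{m+2}\}$ is a basis of the polynomials of degree $\le m+2$, elementary row operations transform the matrix into the Vandermonde matrix in the nodes $1,2,\dots,m+3$, whose determinant $\prod_{1\le r<s\le m+3}(s-r)$ is nonzero. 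This proves invertibility and completes the construction: for $m=1$ one takes $i,k\in\{0,1,2,3\}$ and $j\in\{1,2,3,4\}$, and for $m=2$ one takes $i,k\in\{0,1,2,3,4\}$ and $j\in\{1,2,3,4,5\}$.
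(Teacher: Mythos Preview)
Your proof is correct and follows exactly the approach the paper has in mind: the paper omits the argument, referring to the analogous Lemma~2.5 in \cite{eym}, which is precisely the construction you carry out---build $g_{i,a}$ as a linear combination $\sum_{j=1}^{m+3}c_j^{\,i}f_{j,a}$ and solve the resulting $(m+3)\times(m+3)$ linear system, whose invertibility reduces to a Vandermonde determinant. The only cosmetic addition worth making is to note that, since the $c_j^{\,i}$ are independent of $a$ and $\sup_{a}\|f_{j,a}\|_{B_1}<\infty$ by Lemma~\ref{l11}, one also gets $\sup_{a}\|g_{i,a}\|_{B_1}<\infty$ and $g_{i,a}\to 0$ uniformly on compacta as $|a|\to 1$, facts used later in Theorems~\ref{th112} and~\ref{th112z}.
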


Let $f \in B_1$. Then
\begin{align*}
 || T^m_{u, v, \varphi} f ||_{\mathcal{Z}_{\alpha}} =  |T^m_{u, v, \varphi} f (0)| + |(T^m_{u, v, \varphi} f)' (0)| + \sup_{z \in \mathbb{D}}(1-|z|^2)^{\alpha} | (T^m_{u, v, \varphi} f)'' (z) |.
\end{align*}
We compute the above sentences separately. We have
\begin{align}\label{e12}
& (T^m_{u, v, \varphi} f)' (0) =\\
 &u'(0) f(\varphi(0)) + u(0) \varphi'(0) f'(\varphi(0)) + v'(0) f^{(m)} (\varphi(0)) + v (0) \varphi'(0) f^{(m+1)}(\varphi(0)).\nonumber
\end{align}
And
\begin{align*}
  (T^m_{u, v, \varphi} f)'' (z) = & \sum_{i=0}^2 \big(I_i(z) f^{(i)}(\varphi(z))+ I_{i+m}(z) f^{(i+m)}(\varphi(z))\big),
\end{align*}
where,
$$ I_0 = u'' \ \ \ \ I_1 = 2 u'\varphi' + u \varphi'', \ \ \ \ I_2 =u \varphi'^2 $$
$$ I_m = v'' \ \ \ \ I_{m+1} =  2 v'\varphi' + v \varphi'' \ \ \ \ I_{m+2} = v \varphi'^2 $$
\begin{theorem}\label{th10}
Let $\alpha>0$, $u, v, \varphi \in H(\mathbb{D})$,  $\varphi: \mathbb{D} \rightarrow \mathbb{D}$ and $m > 2$ be an integer. Then the following conditions are equivalent:
\begin{enumerate}
\item [(i)]
The operator $T^m_{u, v, \varphi} : B_1 \rightarrow \mathcal{Z}_{\alpha} $ is  bounded.
 \item [(ii)] For each $ j \in \{0,1,2,m,m+1, m+2 \}=\mathfrak{Q}$
 \[
 \max\{\sup_{a \in \mathbb{D}} ||T_{u,v,\varphi}^m f_{j,a}||_{\mathcal{Z}_{\alpha}},\ \  \sup_{z \in \mathbb{D}} (1-|z|^2)^{\alpha} |I_j (z)| \}< \infty.
 \]
\item [(iii)]
$$ \sup_{z \in \mathbb{D}} \frac{(1-|z|^2)^{\alpha} |I_k (z)|}{(1-|\varphi(z)|^2)^{k}} < \infty, \ \ \ \ k \in\mathfrak{Q}. $$
 \end{enumerate}
\end{theorem}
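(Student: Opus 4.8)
The plan is to prove the chain of implications $(i)\Rightarrow(ii)\Rightarrow(iii)\Rightarrow(i)$.

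\emph{Step 1: $(i)\Rightarrow(ii)$.} The estimate $\sup_{a\in\mathbb{D}}\|T^m_{u,v,\varphi}f_{j,a}\|_{\mathcal{Z}_{\alpha}}\le\|T^m_{u,v,\varphi}\|\,\sup_{a\in\mathbb{D}}\|f_{j,a}\|_{B_1}<\infty$ is immediate from Lemma \ref{l11}. To bound $I_j$ I would test $T^m_{u,v,\varphi}$ on the monomials $p_j(z)=z^j$, $j\in\mathfrak{Q}$, which are polynomials and hence lie in $B_1$. Since $p_j^{(k)}\equiv0$ for $k>j$, the formula for $(T^m_{u,v,\varphi}f)''$ gives $(T^m_{u,v,\varphi}p_0)''=I_0$, then $(T^m_{u,v,\varphi}p_1)''=I_0\varphi+I_1$, then $(T^m_{u,v,\varphi}p_2)''=I_0\varphi^2+2I_1\varphi+2I_2$, and (because $m>2$ keeps the index blocks $\{0,1,2\}$ and $\{m,m+1,m+2\}$ disjoint) $(T^m_{u,v,\varphi}p_m)''=m!\,I_m+(\text{a combination of }I_0,I_1,I_2)$, and similarly for $p_{m+1}$ and $p_{m+2}$. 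Running through $j=0,1,2,m,m+1,m+2$ in this order, at each step the triangle inequality expresses $(1-|z|^2)^{\alpha}|I_j(z)|$ in terms of $\|T^m_{u,v,\varphi}p_j\|_{s\mathcal{Z}_{\alpha}}\le\|T^m_{u,v,\varphi}\|\,\|p_j\|_{B_1}$ and the previously bounded quantities $(1-|z|^2)^{\alpha}|I_k(z)|$, $k<j$ (using $|\varphi|<1$), which is (ii).

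\emph{Step 2: $(ii)\Rightarrow(iii)$.} Fix $z_0\in\mathbb{D}$, put $a=\varphi(z_0)$, and split into two cases. If $|a|\le\tfrac12$, then $(1-|\varphi(z_0)|^2)^{-k}\le(4/3)^k$, so $\dfrac{(1-|z_0|^2)^{\alpha}|I_k(z_0)|}{(1-|\varphi(z_0)|^2)^k}\le(4/3)^k\sup_{z\in\mathbb{D}}(1-|z|^2)^{\alpha}|I_k(z)|<\infty$ by the second half of (ii). If $|a|>\tfrac12$, I would use the functions $g_{k,a}\in B_1$ of Lemma \ref{l12}: being fixed linear combinations of the $f_{j,a}$ with coefficients independent of $a$, they inherit $\sup_{a}\|T^m_{u,v,\varphi}g_{k,a}\|_{\mathcal{Z}_{\alpha}}<\infty$ from the first half of (ii). Substituting $g_{k,a}$ into $(T^m_{u,v,\varphi}g_{k,a})''(z_0)=\sum_{i=0}^2\big(I_i(z_0)g_{k,a}^{(i)}(a)+I_{i+m}(z_0)g_{k,a}^{(i+m)}(a)\big)$ and using $g_{k,a}^{(l)}(a)=\delta_{kl}\overline{a}^{\,l}/(1-|a|^2)^l$ for $l\in\mathfrak{Q}$, every term but one vanishes and $(T^m_{u,v,\varphi}g_{k,a})''(z_0)=I_k(z_0)\,\overline{a}^{\,k}/(1-|a|^2)^k$. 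Hence $\dfrac{(1-|z_0|^2)^{\alpha}|I_k(z_0)|\,|a|^k}{(1-|\varphi(z_0)|^2)^k}\le\|T^m_{u,v,\varphi}g_{k,a}\|_{\mathcal{Z}_{\alpha}}$, and dividing by $|a|^k\ge2^{-k}$ gives the bound. Taking the supremum over $z_0$ in both cases yields (iii).

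\emph{Step 3: $(iii)\Rightarrow(i)$.} Let $f\in B_1$. From \eqref{e12} together with Lemma \ref{l10} (and $\|f\|_{\infty}\le\|f\|_{B_1}$), the quantities $|T^m_{u,v,\varphi}f(0)|$ and $|(T^m_{u,v,\varphi}f)'(0)|$ are bounded by a constant, depending on $u,v,\varphi$ but not on $f$, times $\|f\|_{B_1}$. For the seminorm term, apply the triangle inequality to the formula for $(T^m_{u,v,\varphi}f)''(z)$ and then Lemma \ref{l10} to each factor $f^{(k)}(\varphi(z))$, which gives
\[
(1-|z|^2)^{\alpha}|(T^m_{u,v,\varphi}f)''(z)|\le C\|f\|_{B_1}\sum_{k\in\mathfrak{Q}}\frac{(1-|z|^2)^{\alpha}|I_k(z)|}{(1-|\varphi(z)|^2)^k}.
\]
Taking the supremum over $z\in\mathbb{D}$ and invoking (iii), we obtain $\|T^m_{u,v,\varphi}f\|_{\mathcal{Z}_{\alpha}}\le C\|f\|_{B_1}$, that is, (i).

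The step I expect to be the main obstacle is $(ii)\Rightarrow(iii)$: the crux is to produce, for each $k\in\mathfrak{Q}$ and each $z_0$, a single test function of controlled $B_1$-norm whose image under $T^m_{u,v,\varphi}$ isolates exactly the coefficient $I_k$ with the correct boundary weight $(1-|\varphi(z_0)|^2)^{-k}$ — this is precisely what the interpolating family $\{g_{i,a}\}$ of Lemma \ref{l12} is designed for — and then to handle the degenerate range $\varphi(z_0)\approx0$, where that family is unavailable and the second half of condition (ii) must be used instead. The remaining two implications reduce to routine applications of Lemmas \ref{l10} and \ref{l11} and the explicit expansion of $(T^m_{u,v,\varphi}f)''$.
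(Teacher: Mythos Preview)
Your proposal is correct and follows essentially the same route as the paper: the same cycle of implications, the same test functions (monomials $z^j$ to isolate the $I_j$'s, the family $f_{j,a}$ from Lemma~\ref{l11}, and the interpolating family $g_{i,a}$ from Lemma~\ref{l12}), and the same split in $(ii)\Rightarrow(iii)$ into the cases $|\varphi(z_0)|$ small versus $|\varphi(z_0)|$ bounded away from $0$ (the paper uses the threshold $1/3$ rather than your $1/2$, which is immaterial). The only cosmetic difference is the order in which the three implications are presented.
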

\begin{proof}
$(iii)\Rightarrow (i)$  Suppose that $f \in B_1$. By using Lemma \ref{l10}
\begin{align*}
 (1-|z|^2)^{\alpha} |(T^m_{u, v, \varphi} f)'' (z)| = &  (1-|z|^2)^{\alpha} |\sum_{k \in\mathfrak{Q}}  I_k (z) f^{(k)}(\varphi(z)) | \\
 \leq &\sum_{k \in\mathfrak{Q}}  |I_k (z)|  (1-|z|^2)^{\alpha} | f^{(k)}(\varphi(z))| \\
 \leq  & C \sum_{k \in\mathfrak{Q}} \frac{(1-|z|^2)^{\alpha}|I_k (z) |}{(1-|\varphi(z)|^2)^{k}} ||f||_{B_1}.
\end{align*}
Also using the fact that $||f||_{\infty} \leq ||f||_{B_1}$ and Lemma \ref{l10}, we have
\begin{align*}
 |T^m_{u, v, \varphi} f (0)| \leq &  |u(0) f(\varphi(0))| + |v(0) f^{(m)} (\varphi(0))| \\
 \leq & |u(0)| ||f||_{B_1} +  C \frac{|v(0)|}{(1-|\varphi(0)|^2)^{m}} ||f||_{B_1}
\end{align*}
and
\begin{align*}
|(T^m_{u,v,\varphi}f)' (0)|
\leq C\left (|u'(0)|+\frac{|u(0) \varphi'(0)|}{1-|\varphi(0)|^2} + \frac{|v'(0)|}{(1-|\varphi(0)|^2)^{m}} + \frac{|v (0)\varphi'(0)|}{(1-|\varphi(0)|^2)^{m+1}} \right) ||f||_{B_1}.
\end{align*}
Therefore $T^m_{u, v, \varphi} : B_1 \rightarrow \mathcal{Z}_{\alpha} $ is  bounded.

$(i)\Rightarrow (ii)$ Suppose that $T^m_{u, v, \varphi} : B_1 \rightarrow \mathcal{Z}_{\alpha} $ be a  bounded operator. Lemma \ref{l11} implies that $||f_{j,a}||_{B_1} < \infty$. So
$$ ||T_{u,v,\varphi}^m f_{j,a}||_{\mathcal{Z}_{\alpha}} \leq
|| T_{u,v,\varphi}^m  || ||f_{j,a}||_{B_1} < \infty. $$
Then
$$ \sup_{a \in \mathbb{D}} ||T_{u,v,\varphi}^m f_{j,a}||_{\mathcal{Z}_{\alpha}} \leq || T_{u,v,\varphi}^m  || \sup_{a \in \mathbb{D}, j\in\mathfrak{Q}} ||f_{j,a}||_{B_1} < \infty. $$
Define $f_0 (z) =1 \in B_1$. Boundedness of the operator implies that
\begin{align*}
\sup_{z \in \mathbb{D}} (1-|z|^2)^{\alpha} |I_0 (z)| =  \sup_{z \in \mathbb{D}} (1-|z|^2)^{\alpha} |u''(z)| \leq  ||T_{u,v,\varphi}^m f_0||_{\mathcal{Z}_{\alpha}} \leq || T_{u,v,\varphi}^m  || ||f_0||_{B_1} < \infty.
\end{align*}
Take $f_1 (z) = z \in B_1$. Then we have
\begin{align*}
  \sup_{z \in \mathbb{D}} (1-|z|^2)^{\alpha} |u''(z) \varphi(z) + 2 u'(z) \varphi'(z) + u (z) \varphi''(z)| \leq  ||T_{u,v,\varphi}^m f_1||_{\mathcal{Z}_{\alpha}} \leq || T_{u,v,\varphi}^m  || ||f_1||_{B_1} < \infty.
\end{align*}
Using the previous equations, we can get that
\begin{align*}\label{e15}
\sup_{z \in \mathbb{D}} (1-|z|^2)^{\alpha} |I_1 (z)|< \infty.
\end{align*}
Similarly by employing the functions $f_2 (z) = z^2$, $f_m (z) = z^m$, $f_{m+1} (z) = z^{m+1}$ and $f_{m+2} (z) = z^{m+2}$ for operator $ T_{u,v,\varphi}^m$ we get the other part of $(ii)$.

$(ii)\Rightarrow (iii)$ For any $i \in\mathfrak{Q}$ and $a \in \mathbb{D}$, by applying Lemma \ref{l12} we have
\begin{align*}
\frac{(1-|z|^2)^{\alpha} |I_i (a)| |\varphi(a)|^i}{(1-|\varphi(a)|^2)^i}  \leq & (1-|z|^2)^{\alpha} |(T_{u,v,\varphi}^m g_{i, \varphi(a)})''(a)| \\
\leq & \sup_{a \in \mathbb{D}} ||T_{u,v,\varphi}^m g_{i, \varphi(a)}||_{\mathcal{Z}_{\alpha}} \\
\leq &\max\{\sum_{j=1}^{3} c_j^i \sup_{a \in \mathbb{D}} ||T_{u,v,\varphi}^m f_{j,a}||_{\mathcal{Z}_{\alpha}}, \ \
\sum_{j=m+1}^{m+3} c_j^i \sup_{a \in \mathbb{D}} ||T_{u,v,\varphi}^m f_{j,a}||_{\mathcal{Z}_{\alpha}}\}\\ <& \infty.
\end{align*}
So, for any $i \in\mathfrak{Q}$
\begin{equation}\label{e16}
  \sup_{|\varphi(a)| > 1/3} \frac{(1-|z|^2)^{\alpha} |I_i (a)| }{(1-|\varphi(a)|^2)^i} < \infty.
\end{equation}
On the other hand
\begin{equation}\label{e17}
  \sup_{|\varphi(a)| \leq 1/3} \frac{(1-|z|^2)^{\alpha} |I_i (a)| }{(1-|\varphi(a)|^2)^i} \leq C
   \sup_{a \in \mathbb{D}} (1-|z|^2)^{\alpha} |I_i (a)|  < \infty.
\end{equation}
From the  last inequalities, we get the desired result.
 \end{proof}
In the special case $m\leq2$, by using Lemma \ref{m123}, we have the following theorems  which is stated without proof.
\begin{theorem}\label{th11}
Let $\alpha>0$,  $u, v, \varphi \in H(\mathbb{D})$ and  $\varphi: \mathbb{D} \rightarrow \mathbb{D}$. Then the following conditions are equivalent:
\begin{enumerate}
\item[(i)]  The operator $T^2_{u, v, \varphi} : B_1 \rightarrow \mathcal{Z}_{\alpha} $ is  bounded.
 \item[(ii)] For $j\in\{1,...,5\}$, $ \sup_{a \in \mathbb{D}} ||T_{u,v,\varphi}^m f_{j,a}||_{\mathcal{Z}_{\alpha}} < \infty$ and
\begin{align*}
&\sup_{z \in \mathbb{D}} (1-|z|^2)^{\alpha}\left |u''(z)| +|( 2 u'\varphi' + u \varphi'')(z)|+|(u \varphi'^2 +v'')(z)|\right)< \infty,\\
&\sup_{z \in \mathbb{D}} (1-|z|^2)^{\alpha}\left |(2 v'\varphi' + v \varphi'')(z)|+|(v \varphi'^2)(z)|\right)<\infty.
\end{align*}
\item[(iii)]
\begin{align*}
&\sup_{z \in \mathbb{D}} (1-|z|^2)^{\alpha}\left( |u''(z)| + \frac{|( 2 u'\varphi' + u \varphi'')(z)|}{(1-|\varphi(z)|^2)}+\frac{|(u \varphi'^2 +v'')(z)|}{(1-|\varphi(z)|^2)^2}\right)< \infty,\\
&\sup_{z \in \mathbb{D}} (1-|z|^2)^{\alpha}\left( \frac{|(2 v'\varphi' + v \varphi'')(z)|}{(1-|\varphi(z)|^2)^3}+\frac{|(v \varphi'^2)(z)|}{(1-|\varphi(z)|^2)^4}\right)<\infty.
\end{align*}
 \end{enumerate}
\end{theorem}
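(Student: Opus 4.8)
The plan is to run the same cycle $(iii)\Rightarrow(i)\Rightarrow(ii)\Rightarrow(iii)$ used in the proof of Theorem~\ref{th10}, the only new feature being that for $m=2$ the index set $\{0,1,2,m,m+1,m+2\}$ degenerates to $\{0,1,2,3,4\}$, so the two second–order coefficients $I_2=u\varphi'^2$ and $I_m=v''$ merge into the single symbol $u\varphi'^2+v''$ multiplying $f''(\varphi(z))$; all estimates must therefore be organized around this combined coefficient rather than around $I_2$ and $I_m$ separately, which is exactly the reason the conditions in $(ii)$ and $(iii)$ are grouped the way they are.

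For $(iii)\Rightarrow(i)$ I would fix $f\in B_1$, write
\begin{align*}
(T^2_{u,v,\varphi}f)''(z)=&\,u''(z)f(\varphi(z))+(2u'\varphi'+u\varphi'')(z)f'(\varphi(z))+(u\varphi'^2+v'')(z)f''(\varphi(z))\\
&+(2v'\varphi'+v\varphi'')(z)f'''(\varphi(z))+(v\varphi'^2)(z)f^{(4)}(\varphi(z)),
\end{align*}
and apply Lemma~\ref{l10} in the form $|f^{(k)}(\varphi(z))|\le C\|f\|_{B_1}(1-|\varphi(z)|^2)^{-k}$ to each of the five terms; summing, the bound for $(1-|z|^2)^{\alpha}|(T^2_{u,v,\varphi}f)''(z)|$ is exactly $C\|f\|_{B_1}$ times the two suprema in $(iii)$. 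The point–evaluation terms $|T^2_{u,v,\varphi}f(0)|$ and $|(T^2_{u,v,\varphi}f)'(0)|$ are treated as in Theorem~\ref{th10}, using $\|f\|_{\infty}\le\|f\|_{B_1}$, Lemma~\ref{l10} at the point $\varphi(0)$, and formula \eqref{e12} with $m=2$; they contribute only fixed finite constants. Hence $T^2_{u,v,\varphi}$ is bounded.

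For $(i)\Rightarrow(ii)$, boundedness together with $\sup_{a}\|f_{j,a}\|_{B_1}<\infty$ from Lemma~\ref{l11} gives $\sup_{a}\|T^2_{u,v,\varphi}f_{j,a}\|_{\mathcal{Z}_{\alpha}}<\infty$ for $j\in\{1,\dots,5\}$. I would then substitute the monomials $1,z,z^2,z^3,z^4$ into $T^2_{u,v,\varphi}$: the function $1$ controls $(1-|z|^2)^{\alpha}|u''|$; $z$ then controls $(1-|z|^2)^{\alpha}|2u'\varphi'+u\varphi''|$; $z^2$ then controls $(1-|z|^2)^{\alpha}|u\varphi'^2+v''|$; $z^3$ then controls $(1-|z|^2)^{\alpha}|2v'\varphi'+v\varphi''|$; and $z^4$ then controls $(1-|z|^2)^{\alpha}|v\varphi'^2|$, at each step subtracting off the already–controlled lower–order pieces and using $|\varphi|<1$. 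This produces the second group of conditions in $(ii)$. For $(ii)\Rightarrow(iii)$, fix $i\in\{0,1,2,3,4\}$ and $a\in\mathbb{D}$ with $\varphi(a)\neq0$, take $g_{i,\varphi(a)}\in B_1$ from Lemma~\ref{m123}, so that only the $i$–th term of $(T^2_{u,v,\varphi}g_{i,\varphi(a)})''(a)$ survives; this yields
\[
(1-|a|^2)^{\alpha}|\widehat I_i(a)|\,\frac{|\varphi(a)|^{i}}{(1-|\varphi(a)|^2)^{i}}\le\|T^2_{u,v,\varphi}g_{i,\varphi(a)}\|_{\mathcal{Z}_{\alpha}}<\infty,
\]
where $\widehat I_i$ denotes the relevant coefficient (with $\widehat I_2=u\varphi'^2+v''$) and finiteness follows by expanding $g_{i,\varphi(a)}$ as a finite combination of the $f_{j,\varphi(a)}$ and invoking the first part of $(ii)$. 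This bounds the relevant supremum over $|\varphi(a)|>1/3$, while for $|\varphi(a)|\le1/3$ the weight $(1-|\varphi(a)|^2)^{-i}$ is bounded, so the second part of $(ii)$ closes the estimate; combining the two ranges gives $(iii)$.

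The main obstacle is bookkeeping rather than analysis: one must consistently carry $u\varphi'^2+v''$ as a single coefficient at order $2$, check that the interpolating functions supplied by Lemma~\ref{m123} still separate the derivative orders $0,\dots,4$ on $B_1$ in spite of this collapse, and verify that the monomial test functions in $(i)\Rightarrow(ii)$ peel off the coefficients in the correct order; beyond this there is no new content relative to Theorem~\ref{th10}.
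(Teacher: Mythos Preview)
Your proposal is correct and follows exactly the route the paper intends: the paper states Theorem~\ref{th11} without proof, remarking only that it is obtained from Lemma~\ref{m123} in the same manner as Theorem~\ref{th10}, and your cycle $(iii)\Rightarrow(i)\Rightarrow(ii)\Rightarrow(iii)$ with the merged coefficient $u\varphi'^2+v''$ at order~$2$ is precisely that adaptation. The one point worth flagging is that Lemma~\ref{m123}, as stated, does not explicitly record that each $g_{i,a}$ is a finite linear combination of the $f_{j,a}$ with $j\in\{1,\dots,5\}$; you rely on this in $(ii)\Rightarrow(iii)$, and while it is true (and implicit in the formulation of condition~$(ii)$), strictly speaking it is an ingredient you are supplying beyond the lemma's literal statement.
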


\begin{theorem}\label{th11z}
Let $\alpha>0$,  $u, v, \varphi \in H(\mathbb{D})$ and  $\varphi: \mathbb{D} \rightarrow \mathbb{D}$. Then the following conditions are equivalent:
\begin{enumerate}
\item[(i)]  The operator $T_{u, v, \varphi} : B_1 \rightarrow \mathcal{Z}_{\alpha} $ is  bounded.
 \item[(ii)] For $j\in\{1,...,4\}$, $ \sup_{a \in \mathbb{D}} ||T_{u,v,\varphi}^m f_{j,a}||_{\mathcal{Z}_{\alpha}} < \infty$ and
\begin{align*}
&\sup_{z \in \mathbb{D}} (1-|z|^2)^{\alpha}\left |u''(z)| +|( 2 u'\varphi' + u \varphi'' +v'')(z)|\right)< \infty,\\
&\sup_{z \in \mathbb{D}} (1-|z|^2)^{\alpha}\left |(u \varphi'^2 + 2 v'\varphi' + v \varphi'')(z)|+|(v \varphi'^2)(z)|\right)<\infty.
\end{align*}
\item[(iii)]
\begin{align*}
&\sup_{z \in \mathbb{D}} (1-|z|^2)^{\alpha}\left( |u''(z)| + \frac{|( 2 u'\varphi' + u \varphi'' +v'')(z)|}{(1-|\varphi(z)|^2)}\right)< \infty,\\
&\sup_{z \in \mathbb{D}} (1-|z|^2)^{\alpha}\left( \frac{|(u \varphi'^2+2 v'\varphi' + v \varphi'')(z)|}{(1-|\varphi(z)|^2)^2}+\frac{|(v \varphi'^2)(z)|}{(1-|\varphi(z)|^2)^3}\right)<\infty.
\end{align*}
 \end{enumerate}
\end{theorem}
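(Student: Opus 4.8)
The plan is to prove this as the $m=1$ analogue of Theorem \ref{th10}; the only structural novelty is that for $m=1$ the index set $\{0,1,2,m,m+1,m+2\}$ collapses to $\{0,1,2,3\}$ and the coefficients in the Leibniz-type expansion of $(T_{u,v,\varphi}f)''$ overlap. Writing
\[
(T_{u,v,\varphi}f)''(z)=\sum_{k=0}^{3}J_k(z)\,f^{(k)}(\varphi(z)),
\]
one has $J_0=u''$, $J_1=2u'\varphi'+u\varphi''+v''$, $J_2=u\varphi'^2+2v'\varphi'+v\varphi''$ and $J_3=v\varphi'^2$. In this notation condition (ii) says that $\sup_{a\in\mathbb D}\|T_{u,v,\varphi}f_{j,a}\|_{\mathcal Z_\alpha}<\infty$ for $j\in\{1,\dots,4\}$ together with $\sup_z(1-|z|^2)^\alpha\bigl(|J_0(z)|+|J_1(z)|\bigr)<\infty$ and $\sup_z(1-|z|^2)^\alpha\bigl(|J_2(z)|+|J_3(z)|\bigr)<\infty$, while (iii) says $\sup_z(1-|z|^2)^\alpha|J_k(z)|(1-|\varphi(z)|^2)^{-k}<\infty$ for $k\in\{0,1,2,3\}$. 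The task is then to run the three implications of Theorem \ref{th10} with $\mathfrak{Q}$ replaced by $\{0,1,2,3\}$ and Lemma \ref{l12} replaced by Lemma \ref{m123}.

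For $(iii)\Rightarrow(i)$, given $f\in B_1$ I would estimate $(1-|z|^2)^\alpha|(T_{u,v,\varphi}f)''(z)|\le\sum_{k=0}^{3}|J_k(z)|(1-|z|^2)^\alpha|f^{(k)}(\varphi(z))|$ and apply Lemma \ref{l10} to $f^{(k)}$ at the point $\varphi(z)$, which contributes the factor $(1-|\varphi(z)|^2)^{-k}\|f\|_{B_1}$, and then bound $|T_{u,v,\varphi}f(0)|$ and $|(T_{u,v,\varphi}f)'(0)|$ by a constant times $\|f\|_{B_1}$ via $\|f\|_\infty\le\|f\|_{B_1}$ and Lemma \ref{l10}, exactly as in Theorem \ref{th10}. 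For $(i)\Rightarrow(ii)$, boundedness applied to the uniformly $B_1$-bounded family $\{f_{j,a}\}$ (Lemma \ref{l11}) gives $\sup_a\|T_{u,v,\varphi}f_{j,a}\|_{\mathcal Z_\alpha}\le\|T_{u,v,\varphi}\|\sup_{a,j}\|f_{j,a}\|_{B_1}<\infty$, and testing on the polynomials $1,z,z^2,z^3\in B_1$ gives $\sup_z(1-|z|^2)^\alpha|(T_{u,v,\varphi}z^n)''(z)|<\infty$ for $n=0,1,2,3$; since $(T_{u,v,\varphi}z^n)''(z)=\sum_{k=0}^{n}\tfrac{n!}{(n-k)!}J_k(z)\varphi(z)^{n-k}$ and $|\varphi|\le1$, this is a triangular linear system from which $\sup_z(1-|z|^2)^\alpha|J_k(z)|<\infty$ is extracted one index at a time, and summing consecutive bounds gives the two clustered suprema of (ii).

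For $(ii)\Rightarrow(iii)$, the two clustered suprema of (ii) give $\sup_z(1-|z|^2)^\alpha|J_k(z)|<\infty$ for each $k$. Fixing $i\in\{0,1,2,3\}$, Lemma \ref{m123} (with $m=1$) supplies $g_{i,a}\in B_1$ --- a fixed finite linear combination of the $f_{j,a}$, hence uniformly $B_1$-bounded --- with $g_{i,a}^{(k)}(a)=\delta_{ik}\overline{a}^{\,k}(1-|a|^2)^{-k}$, so evaluating $(T_{u,v,\varphi}g_{i,\varphi(a)})''$ at $z=a$ isolates the single term $J_i(a)\overline{\varphi(a)}^{\,i}(1-|\varphi(a)|^2)^{-i}$. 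For $|\varphi(a)|>1/3$ this yields $(1-|a|^2)^\alpha|J_i(a)|(1-|\varphi(a)|^2)^{-i}\preceq\max_j\sup_a\|T_{u,v,\varphi}f_{j,a}\|_{\mathcal Z_\alpha}<\infty$, while for $|\varphi(a)|\le1/3$ the denominator is bounded below so $(1-|a|^2)^\alpha|J_i(a)|(1-|\varphi(a)|^2)^{-i}\preceq\sup_z(1-|z|^2)^\alpha|J_i(z)|<\infty$; splitting $\mathbb D$ into these two regions gives (iii). The points that need genuine care are that Lemma \ref{m123} still delivers a uniformly $B_1$-bounded separating family over the collapsed range $\{0,1,2,3\}$ and that the clustered suprema in (ii) are equivalent to the individual bounds on the $J_k$ --- everything else is a verbatim transcription of the proof of Theorem \ref{th10}.
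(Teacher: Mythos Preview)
Your proposal is correct and follows exactly the approach the paper indicates: the paper states Theorem~\ref{th11z} without proof, noting only that it is the $m\le 2$ analogue of Theorem~\ref{th10} obtained via Lemma~\ref{m123}, and you have carried out precisely that adaptation---collapsing the index set to $\{0,1,2,3\}$, merging the overlapping Leibniz coefficients into the $J_k$, and replacing Lemma~\ref{l12} by Lemma~\ref{m123}. The only point worth flagging is that Lemma~\ref{m123} as stated does not explicitly record that the $g_{i,a}$ are fixed linear combinations of the $f_{j,a}$ (unlike Lemma~\ref{l12}); you rely on this in the $(ii)\Rightarrow(iii)$ step, and while it is certainly true by the same construction, it would be cleaner to note that this structure is inherited from the proof of Lemma~\ref{m123}.
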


\section{Essential Norm}
In this section, some estimations for the essential norm of the operator $T^m_{u,v,\varphi}$  from  minimal M\"{o}bius invariant spaces into Zygmund type spaces are  given.
\begin{theorem}\label{th15}
Let $u, v, \varphi \in H(\mathbb{D})$,  $\varphi: \mathbb{D} \rightarrow \mathbb{D}$ and $2<m\in\mathbb{N}$. Let the operator $T^m_{u, v, \varphi} : B_1 \rightarrow \mathcal{Z}_{\alpha} $ be  bounded then
$$ ||T^m_{u, v, \varphi}||_e \approx \max \{ E_i \}_{i=1}^6 \approx \max \{ F_k \}_{k \in \{0,1,2,m,m+1, m+2 \}}$$
where,
$$ E_i = \limsup_{|a| \rightarrow 1} ||T_{u,v,\varphi}^m f_{i,a}||_{\mathcal{Z}_{\alpha}} \ \mbox{and}\ \  F_k = \limsup_{|\varphi(z)| \rightarrow 1} \frac{(1-|z|^2)^{\alpha} |I_k (z)|}{(1-|\varphi(z)|^2)^{k}}.$$
\end{theorem}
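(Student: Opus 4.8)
The plan is to prove both comparisons at once by closing up the cycle
$$\max_{k\in\mathfrak{Q}}F_k\ \succeq\ \|T^m_{u,v,\varphi}\|_e\ \succeq\ \max_{j}E_j\ \succeq\ \max_{k\in\mathfrak{Q}}F_k ,$$
where in the middle quantity $j$ runs over the six indices $\{1,2,3,m+1,m+2,m+3\}$ appearing in Lemma~\ref{l12} (these are the $E_i$, $i=1,\dots,6$). Once the three inequalities are established, $\|T^m_{u,v,\varphi}\|_e$, $\max_j E_j$ and $\max_k F_k$ are automatically comparable.

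\textbf{Lower bound $\|T^m_{u,v,\varphi}\|_e\succeq\max_j E_j$.} I would use the functions $f_{j,a}$ as essentially weakly-null vectors: by Lemma~\ref{l11} they are bounded in $B_1$ and tend to $0$ uniformly on compacta as $|a|\to1$, so — since point evaluations of $f,f',f''$ are continuous on $\mathcal{Z}_\alpha$ and bounded subsets of $B_1$ are normal families — a routine argument gives $\|Kf_{j,a}\|_{\mathcal{Z}_\alpha}\to0$ as $|a|\to1$ for every compact $K:B_1\to\mathcal{Z}_\alpha$. From $\|T^m_{u,v,\varphi}f_{j,a}\|_{\mathcal{Z}_\alpha}\le\|(T^m_{u,v,\varphi}-K)f_{j,a}\|_{\mathcal{Z}_\alpha}+\|Kf_{j,a}\|_{\mathcal{Z}_\alpha}$ one takes $\limsup_{|a|\to1}$, then the infimum over compact $K$, using $\sup_{a}\|f_{j,a}\|_{B_1}<\infty$, to obtain $E_j\preceq\|T^m_{u,v,\varphi}\|_e$ for each $j$.

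\textbf{The inequality $\max_j E_j\succeq\max_k F_k$.} This is the essential-norm analogue of the step $(ii)\Rightarrow(iii)$ in Theorem~\ref{th10}. Fix $i\in\mathfrak{Q}$, pick $z_n$ with $|\varphi(z_n)|\to1$ along which $\frac{(1-|z_n|^2)^\alpha|I_i(z_n)|}{(1-|\varphi(z_n)|^2)^i}\to F_i$, and apply $T^m_{u,v,\varphi}$ to $g_{i,\varphi(z_n)}$ from Lemma~\ref{l12}. Since $g_{i,\varphi(z_n)}^{(k)}(\varphi(z_n))=\delta_{ik}\,\overline{\varphi(z_n)}^{\,k}(1-|\varphi(z_n)|^2)^{-k}$ for $k\in\mathfrak{Q}$, only the $k=i$ term survives in $(T^m_{u,v,\varphi}g_{i,\varphi(z_n)})''(z_n)$, so
$$\frac{(1-|z_n|^2)^\alpha|I_i(z_n)|\,|\varphi(z_n)|^i}{(1-|\varphi(z_n)|^2)^i}=(1-|z_n|^2)^\alpha\bigl|(T^m_{u,v,\varphi}g_{i,\varphi(z_n)})''(z_n)\bigr|\le\sum_{j}|c_j^i|\,\|T^m_{u,v,\varphi}f_{j,\varphi(z_n)}\|_{\mathcal{Z}_\alpha}.$$
Letting $n\to\infty$ (so $|\varphi(z_n)|^i\to1$ and $\limsup_n\|T^m_{u,v,\varphi}f_{j,\varphi(z_n)}\|_{\mathcal{Z}_\alpha}\le E_j$) gives $F_i\preceq\max_j E_j$ for each $i\in\mathfrak{Q}$.

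\textbf{Upper bound $\max_k F_k\succeq\|T^m_{u,v,\varphi}\|_e$.} This is the substantive part. For $0<r<1$ let $K_rf(z)=f(rz)$ and write $f_r=K_rf$. A change of variables in \eqref{e11} together with Lemma~\ref{l10} and the Cauchy estimates should show that $K_r$ is bounded on $B_1$ and sends bounded, compactly-convergent-to-$0$ sequences to $B_1$-null sequences; hence $K_r$ is compact on $B_1$, $T^m_{u,v,\varphi}K_r$ is compact, and $\|T^m_{u,v,\varphi}\|_e\le\limsup_{r\to1}\|T^m_{u,v,\varphi}(I-K_r)\|$. It then remains to estimate $\|T^m_{u,v,\varphi}(f-f_r)\|_{\mathcal{Z}_\alpha}$ over $\|f\|_{B_1}\le1$. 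Using $(T^m_{u,v,\varphi}(f-f_r))''(z)=\sum_{k\in\mathfrak{Q}}I_k(z)\bigl(f^{(k)}(\varphi(z))-r^k f^{(k)}(r\varphi(z))\bigr)$, I would split $\mathbb{D}$ into $\{|\varphi(z)|>s\}$ and $\{|\varphi(z)|\le s\}$ for a fixed $s<1$. On $\{|\varphi(z)|>s\}$, Lemma~\ref{l10} bounds each of $|f^{(k)}(\varphi(z))|,|f^{(k)}(r\varphi(z))|$ by $C(1-|\varphi(z)|^2)^{-k}$, so that contribution is at most $C\sum_{k}\sup_{|\varphi(z)|>s}\frac{(1-|z|^2)^\alpha|I_k(z)|}{(1-|\varphi(z)|^2)^k}$, which tends to $C\max_k F_k$ as $s\to1$. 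On $\{|\varphi(z)|\le s\}$ the point $\varphi(z)$ stays in a fixed compact set, the family $\{f^{(k)}:\|f\|_{B_1}\le1\}$ is uniformly bounded and equicontinuous there (again Lemma~\ref{l10} and Cauchy estimates) and $\sup_z(1-|z|^2)^\alpha|I_k(z)|<\infty$ by boundedness of $T^m_{u,v,\varphi}$, so $f^{(k)}(\varphi(z))-r^k f^{(k)}(r\varphi(z))\to0$ uniformly in $f$ and in such $z$, and that contribution $\to0$ as $r\to1$. The remaining summands $|T^m_{u,v,\varphi}(f-f_r)(0)|$ and $|(T^m_{u,v,\varphi}(f-f_r))'(0)|$ only involve $f,f',f^{(m)},f^{(m+1)}$ at the fixed point $\varphi(0)\in\mathbb{D}$ and likewise go to $0$ uniformly over the unit ball. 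Combining, $\limsup_{r\to1}\|T^m_{u,v,\varphi}(I-K_r)\|\preceq\max_k F_k$, which closes the cycle. The hard part will be this upper bound — above all, obtaining the convergence on $\{|\varphi(z)|\le s\}$ \emph{uniformly over the whole unit ball of $B_1$} rather than for one fixed $f$, which is exactly where Lemma~\ref{l10} (a uniform bound on \emph{all} derivatives of $B_1$-functions) and the resulting normal-families argument are indispensable, along with a careful verification of the compactness of $T^m_{u,v,\varphi}K_r$.
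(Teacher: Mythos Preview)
Your proposal is correct and uses the same toolkit as the paper --- the test families $f_{j,a}$ and $g_{i,a}$ from Lemmas~\ref{l11}--\ref{l12}, the dilation approximants $K_r$, and the splitting of the weighted second-derivative estimate according to whether $|\varphi(z)|$ is close to $1$ --- so the arguments match closely. The one structural difference is that you close a three-step cycle $\max_k F_k\succeq\|T^m_{u,v,\varphi}\|_e\succeq\max_j E_j\succeq\max_k F_k$, whereas the paper proves all four inequalities separately: it gets both lower bounds $\|T^m_{u,v,\varphi}\|_e\succeq\max E_j$ and $\|T^m_{u,v,\varphi}\|_e\succeq\max F_k$ by testing against compact $K$, and it extracts both upper bounds $\preceq\max E_j$ and $\preceq\max F_k$ from the same $K_r$-estimate by bounding the terms $A_{k,6}$ in two ways. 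Your route is slightly more economical (you never need the direct lower bound by $F_k$ nor the upper bound by $E_j$), and your intermediate step $\max_j E_j\succeq\max_k F_k$ is exactly the essential-norm analogue of $(ii)\Rightarrow(iii)$ in Theorem~\ref{th10}, which the paper does not isolate. You also correctly flag the uniformity-over-the-unit-ball issue on $\{|\varphi(z)|\le s\}$; the paper states that $(f-f_{r_j})^{(i)}\to 0$ uniformly on compacta without commenting on the uniformity in $f$, and your equicontinuity argument via Lemma~\ref{l10} is the right way to fill that in.
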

\begin{proof}
First we prove the lower estimates. Suppose that $K : B_1 \rightarrow \mathcal{Z}_{\alpha} $ be an arbitrary compact operator. Since  $\{ f_{i,a} \} $ is a bounded sequence in $B_1$ and  converges to $0$ uniformly on compact subsets of $\mathbb{D}$ as $|a| \rightarrow 1$, we have
 $\limsup_{|a| \rightarrow 1} ||K f_{i,a}||_{\mathcal{Z}_{\alpha}} =0.$
So
\begin{align*}
\|T^m_{u, v, \varphi}-K\|_{B_1 \rightarrow \mathcal{Z}_{\alpha}} &\succeq
 \limsup_{|a|\rightarrow 1}\|(T^m_{u, v, \varphi}-K)f_{i,a}\|_{\mathcal{Z}_{\alpha}}=E_i.
 \end{align*}
Then
 \begin{align*}
\|T^m_{u, v, \varphi}\|_{e}= \inf _{K}\|T^m_{u, v, \varphi}-K\|_{B_1\rightarrow \mathcal{Z}_{\alpha}} \succeq
\max\{E_i\}_{i=1}^{6}.
 \end{align*}
For the other part, let $\{z_j\}_{j\in\mathbb{N}}$ be a sequence in $\mathbb{D}$ such that $|\varphi(z_j)|\rightarrow 1$ as $j\rightarrow \infty$. Since $T^m_{u, v, \varphi} : B_1 \rightarrow \mathcal{Z}_{\alpha}$ is bounded, using Lemmas $\ref{l11}$ and $\ref{l12}$   for any compact operator $K: B_1 \rightarrow \mathcal{Z}_{\alpha}$ and $i \in \{0,1,2,m,m+1, m+2 \}$,   we obtain
\begin{align*}
  \| T^m_{u, v, \varphi}-K\|_{B_1 \rightarrow \mathcal{Z}_{\alpha}} &\succeq
 \limsup_{j\rightarrow \infty}\| T^m_{u, v, \varphi}(g_{i, \varphi(z_j)})\|_{\mathcal{Z}_{\alpha}}-
 \limsup_{j\rightarrow \infty}\| K(g_{i, \varphi(z_j)})\|_{\mathcal{Z}_{\alpha}}\nonumber\\
 &\succeq  \limsup_{j\rightarrow \infty}\frac{(1-|z_j|^2)^{\alpha} |\varphi(z_j)|{i} |I_i (z_j)|}{ (1- | \varphi(z_j)|^{2})^{i}}=F_i.
 \end{align*}
 Therefore,
\begin{align*}
\|T^m_{u, v, \varphi}\|_{e}= \inf _{K}\|T^m_{u, v, \varphi}-K\|_{B_1\rightarrow \mathcal{Z}_{\alpha}} \succeq
\max\{F_i\}.
 \end{align*}
Now we prove the upper estimates. Consider the operators $K_r$ on $B_1$, $K_r f(z) = f_r(z) = f(rz)$, where $0 < r <1$.  $K_r$ is a compact operator and $||K_r|| \leq 1$. Let $\{r_j\}\subset (0,1)$ be a sequence such that $r_j\rightarrow 1$ as $j\rightarrow \infty$.
For any positive integer $j$, the operator $T^m_{u,v,\varphi}K_{r_j} : B_1\rightarrow \mathcal{Z}_{\alpha}$ is compact. Thus
\begin{align}\label{e20}
\|T^m_{u,v,\varphi}\|_{e} \leq \limsup_{j\rightarrow \infty}\| T^m_{u,v,\varphi} - T^m_{u,v,\varphi}K_{r_j}\|.
 \end{align}
So it will be sufficient to prove that
\begin{align*}
\limsup_{j\rightarrow \infty}\| T^m_{u,v,\varphi} - T^m_{u,v,\varphi}K_{r_j}\|\preceq \min\{ \max\{E_i\}, \max\{F_i\} \}.
\end{align*}
For any $f\in B_1$ such that $\|f\|_{B_1}\leq 1$,
\begin{align*}
 ||( T^m_{u, v, \varphi} -T^m_{u, v, \varphi} K_{r_j}) f ||_{\mathcal{Z}_{\alpha}} =  A_1 + A_2 + A_3
\end{align*}
where
$$ A_1 =: \left|T^m_{u, v, \varphi} f (0) - T^m_{u, v, \varphi} f_{r_j}(0)\right| = \left|u(0) (f-f_{r_j})(\varphi(0)) + v(0) (f-f_{r_j})^{(m)}(\varphi(0))\right|$$
\begin{align*}
 A_2=: & \left|(T^m_{u, v, \varphi} f - T^m_{u, v, \varphi}f_{r_j})' (0)\right|
  =| u'(0)(f-f_{r_j})(\varphi(0)) + u(0) \varphi'(0) (f-f_{r_j})'(\varphi(0)) \\&+ v'(0) (f-f_{r_j})^{(m)}(\varphi(0)) + v(0) \varphi'(0) (f-f_{r_j})^{(m+1)}(\varphi(0)) |
\end{align*}
\begin{align*}
A_3 =: & \sup_{z \in \mathbb{D}}(1-|z|^2)^{\alpha} \left|(T^m_{u, v, \varphi} f - T^m_{u, v, \varphi} f_{r_j})'' (z)\right| \\
= & \sup_{z \in \mathbb{D}} (1-|z|^2)^{\alpha} \left|\sum_{k \in \{0,1,2,m,m+1, m+2 \}}  I_k (z) (f-f_{r_j})^{(k)}(\varphi(z))\right| \\
\leq &  \sup_{|\varphi(z)| \leq r_N} (1-|z|^2)^{\alpha} \sum_{k \in \{0,1,2,m,m+1, m+2 \}}  \left|I_k (z) (f-f_{r_j})^{(k)}(\varphi(z)) \right| \\
& + \sup_{|\varphi(z)| > r_N} (1-|z|^2)^{\alpha} \sum_{k \in \{0,1,2,m,m+1, m+2 \}}  \left|I_k (z) (f-f_{r_j})^{(k)}(\varphi(z)) \right| \\
=: & A_4  + A_5
\end{align*}
Since $(f-f_{r_j})^{(i)} \rightarrow 0$ uniformly on compact subsets of ${\mathbb{D}}$ as $j\rightarrow \infty$, for any nonnegative integer $i$, then using Theorem \ref{th10}, we get
$$ \limsup_{j \rightarrow \infty} A_1 =\limsup_{j \rightarrow \infty} A_2 =\limsup_{j \rightarrow \infty} A_4 =0.  $$
About $A_5$, we get
\begin{align*}
A_5 \leq  &    \sum_{k \in \{0,1,2,m,m+1, m+2 \}} \sup_{|\varphi(z)| > r_N} (1-|z|^2)^{\alpha} |I_k (z)| | f^{(k)}(\varphi(z)) | \\
& + \sum_{k \in \{0,1,2,m,m+1, m+2 \}} \sup_{|\varphi(z)| > r_N} (1-|z|^2)^{\alpha} |I_k (z)| | r_j^k f^{(k)}(r_j \varphi(z)) | \\
= & \sum_{k \in \{0,1,2,m,m+1, m+2 \}} A_{k,6} + \sum_{k \in \{0,1,2,m,m+1, m+2 \}} A_{k,7}.
\end{align*}
For $A_{k,6}$, using Lemmas \ref{l10}, \ref{l11} and \ref{l12}, we obtain
\begin{align*}
A_{k,6} = &  \sup_{|\varphi(z)| > r_N}  \frac{(1-|\varphi(z)|^2)^k f^{(k)}(\varphi(z))}{|\varphi(z)|^k} \frac{(1-|z|^2)^{\alpha}|I_K(z)| |\varphi(z)|^k}{1-|\varphi(z)|^2)^k}  \\
\leq & ||f||_{B_1} \sup_{|\varphi(z)| > r_N} ||( T^m_{u, v, \varphi} g_{k, \varphi(z)}||_{\mathcal{Z}_{\alpha}}  \\
\preceq & \sum_{j=1}^6 c_j^k \sup_{|a| > r_N} ||T^m_{u, v, \varphi} f_{j,a}||_{\mathcal{Z}_{\alpha}}
\end{align*}
where $k \in \{0,1,2,m,m+1, m+2 \}$.
As $N \rightarrow \infty$,
$$ \limsup_{j \rightarrow \infty} A_{k,6} \preceq \sum_{j=1}^6  \limsup_{|a| \rightarrow 1} || T^m_{u, v, \varphi} f_{j,a}||_{\mathcal{Z}_{\alpha}} \preceq \max \{ E_j \}_{j=1}^6.  $$
Also for $A_{k,6}$ we can write
\begin{align*}
A_{k,6} = &  \sup_{|\varphi(z)| > r_N}  (1-|\varphi(z)|^2)^k f^{(k)}(\varphi(z))  \frac{(1-|z|^2)^{\alpha}|I_K(z)|}{1-|\varphi(z)|^2)^k}  \\
\preceq & ||f||_{B_1} \sup_{|\varphi(z)| > r_N} \frac{(1-|z|^2)^{\alpha}|I_K(z)|}{(1-|\varphi(z)|^2)^k}
\end{align*}
which can be deduced that
$$\limsup_{j \rightarrow \infty} A_{k,6} \preceq  \limsup_{|\varphi(z)| \rightarrow 1}  \frac{(1-|z|^2)^{\alpha}|I_k(z)|}{(1-|\varphi(z)|^2)^k} \leq
\max \{ F_k \}_1^6. $$
A similar argument can de done for $A_{k,7}$. Thus we prove that
$$ \sup_{||f||_{B_1} \leq 1} ||( T^m_{u, v, \varphi} -T^m_{u, v, \varphi} K_{r_j}) f ||_{\mathcal{Z}_{\alpha}} \preceq \max \{ E_j \}_{1}^6 $$
and
$$ \sup_{||f||_{B_1} \leq 1} ||( T^m_{u, v, \varphi} -T^m_{u, v, \varphi} K_{r_j}) f ||_{\mathcal{Z}_{\alpha}} \preceq \max \{ F_k \}_1^6. $$
Finally we have
\begin{align*}
\limsup_{j\rightarrow \infty}\| T^m_{u,v,\varphi} - T^m_{u,v,\varphi}K_{r_j}\|\preceq \min\{ \max\{E_i\}_1^6, \max\{F_k\}_1^6 \}.
\end{align*}
\end{proof}
In the case $m\leq 2$, the similar result can be stated using Theorems \ref{th11} and  \ref{th11z}.
\begin{theorem}\label{th112}
Let $\alpha>0$,  $u, v, \varphi \in H(\mathbb{D})$ and  $\varphi: \mathbb{D} \rightarrow \mathbb{D}$ and  the operator $T^2_{u, v, \varphi} : B_1 \rightarrow \mathcal{Z}_{\alpha} $ be  bounded. Then
\begin{align*}
&||T^2_{u, v, \varphi}||_e \approx \max \{\limsup_{|a|\rightarrow 1} ||T_{u,v,\varphi}^2 f_{j,a}||_{\mathcal{Z}_{\alpha}} \}_{1}^5 \approx\\
&\limsup_{|\varphi(z)|\rightarrow 1} (1-|z|^2)^{\alpha}\left( |u''(z)| + \frac{|( 2 u'\varphi' + u \varphi'')(z)|}{(1-|\varphi(z)|^2)}+\frac{|(u \varphi'^2 +v'')(z)|}{(1-|\varphi(z)|^2)^2}\right)+\\
&\limsup_{|\varphi(z)|\rightarrow 1} (1-|z|^2)^{\alpha}\left( \frac{|(2 v'\varphi' + v \varphi'')(z)|}{(1-|\varphi(z)|^2)^3}+\frac{|(v \varphi'^2)(z)|}{(1-|\varphi(z)|^2)^4}\right).
\end{align*}
\end{theorem}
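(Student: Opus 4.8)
The plan is to reproduce the architecture of the proof of Theorem \ref{th15}, adapting it to the collision of derivative orders that occurs at $m=2$. Since $f^{(m)}=f''$ then coincides with the second derivative already present in $(T^2_{u,v,\varphi}f)''$, the six symbols $I_0,\dots,I_{m+2}$ combine into the five coefficients $J_0=u''$, $J_1=2u'\varphi'+u\varphi''$, $J_2=u\varphi'^2+v''$, $J_3=2v'\varphi'+v\varphi''$, $J_4=v\varphi'^2$, with $(T^2_{u,v,\varphi}f)''=\sum_{k=0}^{4}J_k\,(f^{(k)}\circ\varphi)$ and $J_2=I_2+I_m$ the \emph{merged} symbol attached to $f''\circ\varphi$. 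Accordingly I would use Lemma \ref{m123} in place of Lemma \ref{l12} (the latter being valid only for $m\notin\{1,2\}$) and Theorem \ref{th11} in place of Theorem \ref{th10}. Write $f_r:=K_rf$ with $K_rf(z)=f(rz)$.

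For the \textbf{lower estimate} I would argue as in Theorem \ref{th15}. First, for any compact $K:B_1\to\mathcal{Z}_\alpha$, since by Lemma \ref{l11} each family $\{f_{j,a}\}_{j=1}^{5}$ is bounded in $B_1$ and tends to $0$ on compacta as $|a|\to1$, one has $\limsup_{|a|\to1}\|Kf_{j,a}\|_{\mathcal{Z}_\alpha}=0$, hence $\|T^2_{u,v,\varphi}-K\|\succeq\limsup_{|a|\to1}\|T^2_{u,v,\varphi}f_{j,a}\|_{\mathcal{Z}_\alpha}$ for every $j$, and taking the infimum over $K$ gives $\|T^2_{u,v,\varphi}\|_e\succeq\max_{1\le j\le5}\limsup_{|a|\to1}\|T^2_{u,v,\varphi}f_{j,a}\|_{\mathcal{Z}_\alpha}$. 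Next, for a sequence $\{z_j\}\subset\mathbb{D}$ with $|\varphi(z_j)|\to1$ I would feed the functions $g_{i,\varphi(z_j)}$ of Lemma \ref{m123} ($i\in\{0,1,2,3,4\}$) into $T^2_{u,v,\varphi}-K$; these are bounded in $B_1$ and tend to $0$ on compacta (being finite $a$-independent combinations of the $f_{j,a}$), so the $K$-part drops out, and since $(T^2_{u,v,\varphi}g)''=\sum_{k=0}^{4}J_k\,(g^{(k)}\circ\varphi)$ while $g_{i,\varphi(z_j)}^{(k)}(\varphi(z_j))=\delta_{ik}\overline{\varphi(z_j)}^{k}/(1-|\varphi(z_j)|^2)^{k}$, evaluating the second derivative at $z_j$ isolates $J_i(z_j)\overline{\varphi(z_j)}^{i}/(1-|\varphi(z_j)|^2)^{i}$ (at $i=2$ this is exactly where $I_2$ and $I_m$ pool together). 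Letting $j\to\infty$ and using that $\{z_j\}$ was arbitrary subject to $|\varphi(z_j)|\to1$, one obtains $\|T^2_{u,v,\varphi}\|_e\succeq\limsup_{|\varphi(z)|\to1}(1-|z|^2)^\alpha|J_i(z)|/(1-|\varphi(z)|^2)^{i}$ for each $i$, and since there are only five indices, $\|T^2_{u,v,\varphi}\|_e$ dominates the displayed sum of $\limsup$'s.

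For the \textbf{upper estimate} I would use the dilations $K_{r_j}$ ($r_j\uparrow1$), which are compact on $B_1$ with $\|K_{r_j}\|\le1$, so $\|T^2_{u,v,\varphi}\|_e\le\limsup_{j\to\infty}\|T^2_{u,v,\varphi}-T^2_{u,v,\varphi}K_{r_j}\|$. For $\|f\|_{B_1}\le1$ I would split $\|(T^2_{u,v,\varphi}-T^2_{u,v,\varphi}K_{r_j})f\|_{\mathcal{Z}_\alpha}$ into the two point functionals at $0$ plus $\sup_{z}(1-|z|^2)^\alpha|(T^2_{u,v,\varphi}(f-f_{r_j}))''(z)|$, and split the last supremum into the regions $|\varphi(z)|\le r_N$ and $|\varphi(z)|>r_N$. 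On compacta the differences $(f-f_{r_j})^{(k)}$ converge to $0$ uniformly, so (using the finiteness of the symbols from Theorem \ref{th11}) the point functionals and the $|\varphi(z)|\le r_N$ part vanish as $j\to\infty$ for each fixed $N$. On $\{|\varphi(z)|>r_N\}$ I would write each summand as $(1-|\varphi(z)|^2)^{k}f^{(k)}(\varphi(z))\cdot J_k(z)/(1-|\varphi(z)|^2)^{k}$ (and similarly for the $r_j^{k}f^{(k)}(r_j\varphi(z))$ terms) and bound it two ways: via Lemma \ref{l10}, getting $C\sup_{|\varphi(z)|>r_N}(1-|z|^2)^\alpha|J_k(z)|/(1-|\varphi(z)|^2)^{k}$, which tends to the displayed sum of $\limsup$'s as $N\to\infty$; and via recognizing that expression (up to the $a$-independent constants $c_j^{k}$ of Lemma \ref{m123}) as $\|T^2_{u,v,\varphi}g_{k,\varphi(z)}\|_{\mathcal{Z}_\alpha}$, getting $\sum_{j=1}^{5}c_j^{k}\sup_{|a|>r_N}\|T^2_{u,v,\varphi}f_{j,a}\|_{\mathcal{Z}_\alpha}$, which tends to something $\preceq\max_{1\le j\le5}\limsup_{|a|\to1}\|T^2_{u,v,\varphi}f_{j,a}\|_{\mathcal{Z}_\alpha}$. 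Combining, $\limsup_j\|T^2_{u,v,\varphi}-T^2_{u,v,\varphi}K_{r_j}\|\preceq$ each of the two candidate quantities, which with the lower estimates yields all the stated comparabilities.

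The step I expect to be the main obstacle — indeed the only genuine difference from Theorem \ref{th15} — is the bookkeeping around the collision at order $2$: one must make sure the test functions of Lemma \ref{m123} isolate the merged symbol $J_2=u\varphi'^2+v''$ as one unit rather than $u\varphi'^2$ and $v''$ separately. This is automatic from $(T^2_{u,v,\varphi}g)''=\sum_{k=0}^{4}J_k\,(g^{(k)}\circ\varphi)$ once one observes that at the base point only $g_{2,a}^{(2)}$ survives and is multiplied by $I_2+I_m$; correspondingly, Theorem \ref{th11} must be invoked so that the boundedness estimates are already phrased with the merged symbol. The case $m=1$ goes the same way upon replacing Theorem \ref{th11} by Theorem \ref{th11z} and $J_2$ by the two merged symbols $2u'\varphi'+u\varphi''+v''$ (coefficient of $f'$) and $u\varphi'^2+2v'\varphi'+v\varphi''$ (coefficient of $f''$), and we omit the details.
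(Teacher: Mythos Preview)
Your proposal is correct and follows exactly the approach the paper intends: the paper does not give an explicit proof of Theorem~\ref{th112} but simply remarks that the case $m\le 2$ is obtained by repeating the argument of Theorem~\ref{th15} with Theorem~\ref{th11} and Lemma~\ref{m123} in place of Theorem~\ref{th10} and Lemma~\ref{l12}. Your sketch correctly identifies the only substantive change---the collapse of the six symbols $I_0,\dots,I_{m+2}$ into five via the merger $J_2=I_2+I_m=u\varphi'^2+v''$---and carries through both the lower and upper estimates accordingly.
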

\begin{theorem}\label{th112z}
Let $\alpha>0$,  $u, v, \varphi \in H(\mathbb{D})$ and  $\varphi: \mathbb{D} \rightarrow \mathbb{D}$ and  the operator $T_{u, v, \varphi} : B_1 \rightarrow \mathcal{Z}_{\alpha} $ be  bounded. Then
\begin{align*}
&||T_{u, v, \varphi}||_e \approx \max \{\limsup_{|a|\rightarrow 1} ||T_{u,v,\varphi} f_{j,a}||_{\mathcal{Z}_{\alpha}} \}_{1}^4 \approx\\
&\limsup_{|\varphi(z)|\rightarrow 1} \left( |u''(z)| + \frac{|( 2 u'\varphi' + u \varphi'' +v'')(z)|}{(1-|\varphi(z)|^2)}\right)+\\
&\limsup_{|\varphi(z)|\rightarrow 1}  (1-|z|^2)^{\alpha}\left( \frac{|(u \varphi'^2+2 v'\varphi' + v \varphi'')(z)|}{(1-|\varphi(z)|^2)^2}+\frac{|(v \varphi'^2)(z)|}{(1-|\varphi(z)|^2)^3}\right).
\end{align*}
\end{theorem}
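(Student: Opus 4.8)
The plan is to transcribe the proof of Theorem~\ref{th15} to the case $m=1$; the essential new feature is that the six indices $\{0,1,2,m,m+1,m+2\}$ now overlap, collapsing to the four effective indices $\{0,1,2,3\}$, so the coefficients $I_0,\dots,I_{m+2}$ of the general case must be merged. Accordingly I work with the combined coefficients $J_0=u''$, $J_1=2u'\varphi'+u\varphi''+v''$, $J_2=u\varphi'^2+2v'\varphi'+v\varphi''$, $J_3=v\varphi'^2$, for which $(T_{u,v,\varphi}f)''(z)=\sum_{k=0}^{3}J_k(z)f^{(k)}(\varphi(z))$ --- exactly the groupings appearing in Theorems~\ref{th11z} and~\ref{th112z} --- and I use Lemma~\ref{m123} in place of Lemma~\ref{l12} to obtain, for each $a\in\mathbb{D}\setminus\{0\}$, functions $g_{i,a}\in B_1$ with $g_{i,a}^{(k)}(a)=\delta_{ik}\overline{a}^{k}(1-|a|^2)^{-k}$ for all $i,k\in\{0,1,2,3\}$, each a fixed linear combination of the $f_{j,a}$ of Lemma~\ref{l11}. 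Throughout, $\Lambda$ denotes the right-hand expression in the statement.

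For the lower estimates I would argue exactly as in Theorem~\ref{th15}. Given an arbitrary compact $K:B_1\to\mathcal{Z}_\alpha$, Lemma~\ref{l11} gives that $\{f_{j,a}\}$ is bounded in $B_1$ and tends to $0$ uniformly on compacta as $|a|\to1$, so $\limsup_{|a|\to1}\|Kf_{j,a}\|_{\mathcal{Z}_\alpha}=0$; hence $\|T_{u,v,\varphi}-K\|\succeq\limsup_{|a|\to1}\|T_{u,v,\varphi}f_{j,a}\|_{\mathcal{Z}_\alpha}$ for $j\in\{1,\dots,4\}$, and taking $\inf_K$ yields the first lower bound. For the second, fix $i\in\{0,1,2,3\}$ and a sequence $z_j$ with $|\varphi(z_j)|\to1$, apply $T_{u,v,\varphi}-K$ to $g_{i,\varphi(z_j)}$ (whose $K$-image vanishes in the limit, being again bounded and compactly null), and note that $\|T_{u,v,\varphi}g_{i,\varphi(z_j)}\|_{\mathcal{Z}_\alpha}$ dominates $|\varphi(z_j)|^i(1-|z_j|^2)^\alpha|J_i(z_j)|(1-|\varphi(z_j)|^2)^{-i}$ with $|\varphi(z_j)|^i\to1$. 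This gives $\|T_{u,v,\varphi}\|_e\succeq\limsup_{|\varphi(z)|\to1}(1-|z|^2)^\alpha|J_i(z)|(1-|\varphi(z)|^2)^{-i}$ for each $i$, hence $\|T_{u,v,\varphi}\|_e\succeq\Lambda$.

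For the upper estimate I would use the compact dilation operators $K_rf(z)=f(rz)$, $\|K_r\|\le1$, and a sequence $r_j\uparrow1$, so that $\|T_{u,v,\varphi}\|_e\le\limsup_j\|T_{u,v,\varphi}-T_{u,v,\varphi}K_{r_j}\|$. With $\|f\|_{B_1}\le1$ and $g=f-f_{r_j}$, the norm $\|(T_{u,v,\varphi}-T_{u,v,\varphi}K_{r_j})f\|_{\mathcal{Z}_\alpha}$ splits into the point-evaluation terms at $0$ and $\sup_z(1-|z|^2)^\alpha|\sum_{k=0}^3 J_k(z)g^{(k)}(\varphi(z))|$; the former and the part of the supremum over $\{|\varphi(z)|\le r_N\}$ go to $0$ as $j\to\infty$ by uniform convergence of the $g^{(k)}$ on compacta, while on $\{|\varphi(z)|>r_N\}$ one writes $g^{(k)}=f^{(k)}-r_j^k f^{(k)}(r_j\cdot)$ and bounds the resulting sums either through $(1-|\varphi(z)|^2)^k|f^{(k)}(\varphi(z))|\le\|T_{u,v,\varphi}g_{k,\varphi(z)}\|_{\mathcal{Z}_\alpha}\preceq\sum_j c_j^k\sup_{|a|>r_N}\|T_{u,v,\varphi}f_{j,a}\|_{\mathcal{Z}_\alpha}$ or through Lemma~\ref{l10} directly (for the dilated term one uses $1-|r_j\varphi(z)|^2\ge1-|\varphi(z)|^2$ to keep the constant uniform). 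Letting $j\to\infty$ and then $N\to\infty$ yields $\limsup_j\|T_{u,v,\varphi}-T_{u,v,\varphi}K_{r_j}\|\preceq\min\{\max\{\limsup_{|a|\to1}\|T_{u,v,\varphi}f_{j,a}\|_{\mathcal{Z}_\alpha}\}_1^4,\Lambda\}$, and combining with the lower estimates finishes the proof.

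The only real obstacle --- everything else being a line-for-line copy of Theorem~\ref{th15} --- is handling the index collapse: one must check that Lemma~\ref{m123} really supplies interpolating functions on the full diagonal $i,k\in\{0,1,2,3\}$, so that the extraction argument isolates each \emph{combined} coefficient $J_i$ rather than the separate $I_j$'s that are no longer well defined when $m=1$. This is precisely the point where the $m=1$ argument differs from the $m>2$ one, and it is exactly what Lemma~\ref{m123} was formulated to provide. (Theorem~\ref{th112} is proved identically, with the indices $\{0,1,2,3,4\}$.)
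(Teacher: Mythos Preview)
Your proposal is correct and is exactly the argument the paper intends: Theorem~\ref{th112z} is stated without proof, the paper merely remarking that the case $m\le 2$ is handled by the same method as Theorem~\ref{th15} with Lemma~\ref{m123} in place of Lemma~\ref{l12}, and your write-up carries this out with the correct index collapse $\{0,1,2,m,m+1,m+2\}\to\{0,1,2,3\}$ and the merged coefficients $J_k$. The only cosmetic slip is in the upper-estimate sketch, where you wrote $(1-|\varphi(z)|^2)^k|f^{(k)}(\varphi(z))|\le\|T_{u,v,\varphi}g_{k,\varphi(z)}\|_{\mathcal{Z}_\alpha}$; it is the factor $(1-|z|^2)^\alpha|J_k(z)||\varphi(z)|^k(1-|\varphi(z)|^2)^{-k}$ that is dominated by $\|T_{u,v,\varphi}g_{k,\varphi(z)}\|_{\mathcal{Z}_\alpha}$, while the other factor is controlled by Lemma~\ref{l10}---but the intended splitting is clear and matches the paper's computation of $A_{k,6}$.
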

By using Theorem \ref{th15}, we have the following Corollary.
\begin{corollary}\label{th16}
Let $u, v, \varphi \in H(\mathbb{D})$,  $\varphi: \mathbb{D} \rightarrow \mathbb{D}$ and $2<m\in\mathbb{N}$. Let  operator $T^m_{u, v, \varphi} : B_1 \rightarrow \mathcal{Z}_{\alpha} $ be  bounded. Then the following conditions are equivalent:
\begin{enumerate}
\item [(i)]
The operator $T^m_{u, v, \varphi} : B_1 \rightarrow \mathcal{Z}_{\alpha} $ is  compact.
 \item [(ii)]
 $$ \limsup_{|a| \rightarrow 1} ||T_{u,v,\varphi}^m f_{i,a}||_{\mathcal{Z}_{\alpha}} =0, \ \ \ \ i = 1, 2,\cdots, 6 $$
\item [(iii)]
$$ \limsup_{|\varphi(z)| \rightarrow 1}  \frac{(1-|z|^2)^{\alpha} |I_k (z)|}{(1-|\varphi(z)|^2)^{k}} =0, \ \ \ \ k \in \{0,1,2,m,m+1, m+2 \}. $$
 \end{enumerate}
\end{corollary}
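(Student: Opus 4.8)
The plan is to read the statement off directly from the essential-norm estimate in Theorem \ref{th15}, so no genuinely new analysis is required. Recall first that a bounded operator between Banach spaces is compact precisely when its essential norm vanishes. Hence, under the standing hypothesis that $T^m_{u,v,\varphi}:B_1\to\mathcal{Z}_\alpha$ is bounded, compactness of the operator is equivalent to $\|T^m_{u,v,\varphi}\|_e=0$, and the whole task reduces to rewriting the condition $\|T^m_{u,v,\varphi}\|_e=0$ in the two forms stated in (ii) and (iii).

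Next I would invoke Theorem \ref{th15}, which gives
\[
\|T^m_{u,v,\varphi}\|_e\approx\max\{E_i\}_{i=1}^6\approx\max\{F_k\}_{k\in\{0,1,2,m,m+1,m+2\}},
\]
with $E_i=\limsup_{|a|\to1}\|T^m_{u,v,\varphi}f_{i,a}\|_{\mathcal{Z}_\alpha}$ and $F_k=\limsup_{|\varphi(z)|\to1}(1-|z|^2)^{\alpha}|I_k(z)|/(1-|\varphi(z)|^2)^k$. Since $A\approx B$ means $C_1B\le A\le C_2B$ for some positive constants $C_1,C_2$, the three quantities $\|T^m_{u,v,\varphi}\|_e$, $\max\{E_i\}_{i=1}^6$ and $\max\{F_k\}$ are simultaneously zero or simultaneously strictly positive. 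Therefore $\|T^m_{u,v,\varphi}\|_e=0$ if and only if $\max\{E_i\}_{i=1}^6=0$ if and only if $\max\{F_k\}=0$.

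Finally, because each $E_i$ and each $F_k$ is a $\limsup$ of nonnegative quantities, the maximum of finitely many of them equals zero exactly when every member of the collection equals zero; that is, $\max\{E_i\}_{i=1}^6=0$ iff $E_i=0$ for all $i\in\{1,\dots,6\}$, and likewise for the $F_k$. Combining this observation with the two displayed equivalences above yields (i)$\Leftrightarrow$(ii) and (i)$\Leftrightarrow$(iii), which is precisely the assertion of the corollary.

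There is essentially no obstacle here; the only point requiring a moment of care is to confirm that the comparison constants hidden in the symbol $\approx$ of Theorem \ref{th15} are genuinely positive and finite — which they are, since boundedness of $T^m_{u,v,\varphi}$ has been assumed — so that "$\approx$" really does transfer the property "equals $0$" in both directions. As an alternative route one could establish (ii)$\Rightarrow$(i) from scratch by a normal-families argument: take a norm-bounded sequence in $B_1$ tending to $0$ uniformly on compact subsets, and reuse the decomposition into the pieces $A_1,\dots,A_5$ from the proof of Theorem \ref{th15}; but channelling everything through the essential-norm formula is shorter and avoids repeating that computation.
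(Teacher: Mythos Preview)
Your argument is correct and mirrors the paper's own: the paper states the corollary as an immediate consequence of Theorem \ref{th15} without further detail, and your write-up simply makes explicit the standard passage from the essential-norm equivalence $\|T^m_{u,v,\varphi}\|_e\approx\max\{E_i\}\approx\max\{F_k\}$ to the compactness criterion $\|T^m_{u,v,\varphi}\|_e=0$.
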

From Theorems \ref{th112} and \ref{th112z}, we obtian similar results for compactness of operator $T^2_{u, v, \varphi} : B_1 \rightarrow \mathcal{Z}_{\alpha}$
and $T_{u, v, \varphi} : B_1 \rightarrow \mathcal{Z}_{\alpha}$ respectively.
\begin{rem}
By taking $u=0(v=0)$, we can get the results of the paper for generalized (weighted) composition operators.
\end{rem}


\end{document}